\numberwithin{equation}{section}
\titleformat*{\section}{\large\bfseries}
\newcommand{\CC}{\mathbb C}
\newcommand{\RR}{\mathbb R}
\newcommand{\ZZ}{\mathbb Z}
\newtheorem{thm}{Theorem}[section]
\newtheorem{lemma}[thm]{Lemma}
\newtheorem{cor}[thm]{Corollary}
\newtheorem{prop}[thm]{Proposition}
\newtheorem{rem}[thm]{Remark}
\newcommand{\Ex}{\mathop{\mathbb{E}}}
\newcommand{\dprod}[2]{\left\langle #1,#2\right\rangle}
\title{Eigenfunction Statistics for Anderson Model with H\"{o}lder continuous single site Potential}
\author{Dhriti Ranjan Dolai and Anish Mallick\\
The Institute of Mathematical Sciences \\
Taramani, Chennai 600113 \\
 dhriti@imsc.res.in, anishm@imsc.res.in}
\begin{document}

\maketitle
\begin{abstract}
\noindent We consider Random Schr\"{o}dinger operators on $\ell^2(\mathbb{Z}^d)$ with $\alpha$-H\"{o}lder continuous ($0<\alpha\leq 1$) single site distribution. In localized regime we study the distribution of eigenfunctions in space and energy simultaneously. In a certain scaling limit we prove limit points are Poisson.
\end{abstract}
{\bf AMS 2010 MSC:} 35J10, 81Q10, 35P20\\
{\bf Keywords:} Anderson Model, H\"{o}lder continuous measure, Poisson statistics.
\section{Introduction}
\noindent 
The Random Schr\"{o}dinger operators $\{H^{\omega}\}_{\omega\in\Omega}$ on $\ell^2(\mathbb{Z}^d)$ is given by
\begin{equation}
\label{model}
H^\omega=\Delta+V^{\omega},\qquad\omega\in\Omega
\end{equation}
where $\Delta$ is discrete Laplacian defined by
$$
(\Delta u)(n)=\displaystyle\sum_{|m-n|=1}u(m)\qquad\forall~n\in\ZZ^d~u\in\ell^2(\mathbb{Z}^d)
$$
and random potential $V^{\omega}$ is defined by
\begin{equation}
\label{potential}
 V^{\omega}=\displaystyle\sum_{n\in\mathbb{Z}^d}\omega_n|\delta_n\rangle\langle\delta_n|.
\end{equation}
where $\{\delta_n\}_{n\in\mathbb{Z}^d}$ is the standard basis for $\ell^2(\mathbb{Z}^d)$ and $\{\omega_n\}_{n\in\mathbb{Z}^d}$ are real valued iid random variables with 
common probability distribution $\mu$ with compact support. The probability space $(\mathbb{R}^{\mathbb{Z}^d},\mathcal{B}_{\mathbb{R}^{\mathbb{Z}^d}},\otimes_{\mathbb{Z}^d} \mu)$ is constructed via Kolmogorov theorem and will be denoted by $(\Omega,\mathcal{B},\mathbb{P})$, and $\omega_n:\Omega\rightarrow\mathbb{R}$ are projection on $n^{th}$ coordinate.\\
For any bounded set $B\subset\mathbb{R}^d$ we consider the orthogonal projection $\chi_B$ onto $\ell^2(B\cap\mathbb{Z}^d)$ and define the matrices
\begin{equation}
\label{resolvent}
 H^{\omega}_B=\big(\langle\delta_n, H^{\omega}\delta_m\rangle\big)_{n,m\in B},~
G^B(z;n,m)=\langle\delta_{n},(H_B^{\omega}-z)^{-1}\delta_{m}\rangle,~G^B(z)=(H_B^{\omega}-z)^{-1}.
\end{equation}
Note that $H^{\omega}_B$ is the matrix
$$\chi_BH^{\omega}\chi_B : \ell^2(B)\rightarrow \ell^2(B)~ a.e~ \omega.$$ Let $E_{H^{\omega}_B}(\cdot)$ be the spectral projection of $H^{\omega}_B$.\\
Set the resolvent operator and it's matrix elements (Green's function) as:
$$
G(z)=(H^{\omega}-z)^{-1},~~G(z;n,m)=\langle\delta_{n},(H^{\omega}-z)^{-1}\delta_{m}\rangle\qquad z\in\mathbb{C}^{+}.
$$
Throughout this article we will be assuming following two conditions:
\begin{enumerate}
\item[{\bf(a)}] The single site distribution $\mu$ is uniformly $\alpha$-H\"{o}lder continuous for some $0<\alpha\leq 1$.
\item[{\bf(b)}] For any $0<s<1$ there exists $r,C>0$ such that for any $\Lambda\subseteq\ZZ^d$
\begin{equation}
\label{exp}
\sup_{\substack{z\in\CC^{+}\\Re(z)\in[a,b]}} \Ex\left[\left|G^\Lambda(z;n,m)\right|^s\right]\leq C e^{-r|n-m|}
\end{equation}
for any $n,m\in\Lambda$.
\end{enumerate}
When the energy $E$ lies in $[a,b]$, then we say that $E$ is in localized regime. Using the resolvent identity we have
$$\lim_{\Lambda\uparrow\ZZ^d}G^\Lambda(z;n,m)=G(z;n,m)\qquad a.e\ \omega$$
for $z\in\CC^{+}$, so \eqref{exp} holds for $\Ex\left[|G(z;n,m)|^s\right]$ with same constant $C,r$.\\
The condition {\bf (b)} was established by Aizenman-Molchanov \cite{AM} at high disorder for $\alpha$-H\"{o}lder continuous single site distribution. Refer to \cite[inequalities (2.10), (3.19) and (3.20)]{AM} for more details.

\noindent It was shown by Krishna \cite{K}, Combes-Hislop-Klopp \cite{CHK} and Combes-Germinet-Klein \cite{JFA} that whenever the single site distribution is uniformly
$\alpha$-H\"{o}lder continuous the Integrated density of states (IDS) is also uniformly $\alpha$-H\"{o}lder continuous, $0<\alpha\leq1$.\\
Before describing our main result,  we need some notations in place. Let $\nu$ be the Integrated density of states (IDS) for the operator $H^\omega$. Define the fractional derivatives:
\begin{equation}
 \label{frac_deri}
d^{\alpha}_\nu(x)=\lim_{\epsilon\to 0}\frac{\nu(x-\epsilon,x+\epsilon)}{(2\epsilon)^\alpha}~~and~~
D^\alpha_\nu(x)=\varlimsup_{\epsilon\to 0}\frac{\nu(x-\epsilon,x+\epsilon)}{(2\epsilon)^\alpha}
\end{equation}

\noindent Let $g:\mathbb{R}^d\longrightarrow \mathbb{R}^+$ and $f:\mathbb{R}\longrightarrow \mathbb{R}^+$ be compactly supported continuous functions. For a self adjoint operator $H$  on $\ell^2(\mathbb{Z}^d)$ with pure point spectrum i.e $\sigma(H)=\sigma_{pp}(H)$, define $M_g$ as the multiplication operator by $g$:
$$
(M_gu)=g(n)u(n)\qquad\forall n\in\ZZ^d,u\in \ell^2(\mathbb{Z}^d).
$$
Let $\{E_j\}_j$ be the eigenvalues (repeated according to multiplicity) of $H$ and $\psi_j$ be the normalized eigenfunction corresponding to eigenvalue $E_j$. Then
\begin{equation}
 \label{trace_formula}
Tr\big(M_gf(H)\big)=\sum_j\sum_{n\in\mathbb{Z}^d}f(E_j)g(n)|\psi_j(n)|^2.
\end{equation}
Define the random measure $\xi^{\omega}$ on $\mathbb{R}^{1+d}$ by
\begin{equation}
 \label{def1}
\int_{\mathbb{R}\times\mathbb{R}^d} f(E,x)d\xi^{\omega}(E,x)=\sum_j\sum_{n\in\mathbb{Z}^d}f(E_j,n)|\psi_j(n)|^2~~~~\forall~f\in C_c(\mathbb{R}\times\mathbb{R}^d),
\end{equation}
following the notations of \eqref{trace_formula}. Following the notation from physics literature
\begin{equation}\label{PhyNot}
d\xi^\omega(E,x)=\sum_j\sum_n |\psi_j(n)|^2\delta(E-E_j)dE\delta(x-n)dx
\end{equation}
where $\delta(x)$ is the Dirac-delta distribution. So for any Borel set $I\in\mathcal{B}_{\mathbb{R}}$ and $Q\in\mathcal{B}_{\mathbb{R}^d}$ we have
\begin{equation}
 \label{def2}
\xi^{\omega}\big(I\times Q\big)=Tr\big(\chi_{_{Q}}E_{H^{\omega}}(I)\chi_{_{Q}}\big)
\end{equation}
Killip-Nakano \cite{NK} studied eigenfunction statistics for discrete Anderson Model with bounded density. There they studied the sequence of random measures given by
$$\int f(E,x) d\theta_{L,\lambda}^\omega(E,x)=\int_{\mathbb{R}\times \mathbb{R}^d}f\bigg(L^d(E-\lambda),\frac{x}{L}\bigg)d\xi^{\omega}(E,x)~~~\forall~f\in C_c(\mathbb{R}\times\mathbb{R}^d)$$
and proved its convergence to Poisson point process. Nakano in \cite{Nak} worked with Continuum Schr\"{o}dinger operator and was able to show the infinite divisibility of the limiting process. We are interested in similar object, we will study the limit of the \emph{random measures} $\xi^{\omega}_{L,\lambda}$ defined by
\begin{equation}
\label{scale_1}
\int_{\mathbb{R}\times\mathbb{R}^d} f(E,x)d\xi^{\omega}_{L,\lambda}(E,x):
=\int_{\mathbb{R}\times \mathbb{R}^d}f\bigg(\beta_L(E-\lambda),\frac{x}{L}\bigg)d\xi^{\omega}(E,x)~~~\forall~f\in C_c(\mathbb{R}\times\mathbb{R}^d),
\end{equation}
where $\lambda\in[a,b]$ satisfying \eqref{exp}; equivalently
\begin{equation}
\label{scale_2}
\xi^{\omega}_{L,\lambda}\big(I\times Q\big)=Tr\big(\chi_{_{LQ}}E_{H^{\omega}}(\lambda+\beta_L^{-1}I)\chi_{_{LQ}}\big),~~I\in\mathcal{B}_\mathbb{R},
Q\in\mathcal{B}_{\mathbb{R}^d},
\end{equation}
with $\beta_L=L^{d/\alpha}$.  $\beta_L$ is chosen based on the work of Dolai-Krishna \cite{DK1}. They used $\beta_L$ as scaling factor for eigenvalue statistics and showed the convergence to a Poisson random variable.
Our main result is the following theorem.
\begin{thm}
 \label{main}
Let $\xi^\omega_{L,\lambda}$ be defined by \eqref{scale_1} where $H^\omega$ is given by \eqref{model} and $\mu,\lambda$ follow assumptions {\bf (a)} and {\bf (b)}. Let $I\subset\mathbb{R}$ be a bounded symmetric interval and $Q\subset\mathbb{R}^d$ be a rectangles with sides parallel to axes. Then there exists a subsequence
$\{L_n\}$ such that the sequence of random variables $\big\{\xi^{\omega}_{L_n,\lambda}\big(I\times Q\big)\big\}$ converge in
distribution to a Poisson random variable with parameter $|I|^\alpha D^\alpha_\nu(\lambda)|Q|=\gamma_{\lambda}(I\times Q)$, whenever $0< D^\alpha_\nu(\lambda)<\infty$.
\end{thm}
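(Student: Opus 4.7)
The plan is to realize $\xi^\omega_{L,\lambda}(I\times Q)$ as (approximately) a sum of independent summands supported in small sub-cubes tiling $LQ$, and then to apply a Grigelionis-type Poisson limit theorem for triangular arrays of independent, asymptotically negligible integer-valued random variables. The first step is a finite-volume reduction: using the geometric resolvent identity together with the fractional moment bound \eqref{exp}, I would replace the infinite-volume spectral projection $E_{H^\omega}(\lambda+\beta_L^{-1}I)$, sandwiched between $\chi_{LQ}$, by $E_{H^\omega_{\Lambda_L}}(\lambda+\beta_L^{-1}I)$ on a cube $\Lambda_L$ slightly larger than $LQ$; exponential off-diagonal decay of the averaged Green's function controls the error in $L^1(\mathbb{P})$.

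Next, I would partition $\Lambda_L$ into disjoint sub-cubes $\{B_k\}$ of side length $\ell_L=L^\theta$ with $0<\theta<1$ (to be optimized), and set
\begin{equation*}
X_k^{(L)}=\operatorname{Tr}\bigl(\chi_{_{LQ\cap B_k}}E_{H^\omega_{B_k}}(\lambda+\beta_L^{-1}I)\chi_{_{LQ\cap B_k}}\bigr).
\end{equation*}
Since distinct cubes depend on disjoint collections of $\omega_n$'s, the $X_k^{(L)}$ are independent. A contour-integral or Helffer--Sj\"ostrand representation of the spectral projection, combined with \eqref{exp}, shows that eigenfunctions of $H^\omega_{\Lambda_L}$ with eigenvalue in $\lambda+\beta_L^{-1}I$ are essentially supported in a single sub-cube, so one obtains $\xi^\omega_{L,\lambda}(I\times Q)=\sum_k X_k^{(L)}+o_{L^1(\mathbb{P})}(1)$.

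It then remains to verify three conditions for the array $\{X_k^{(L)}\}$: (i) asymptotic negligibility $\max_k\mathbb{P}(X_k^{(L)}\geq 1)\to 0$; (ii) convergence of expectations $\sum_k\mathbb{E}[X_k^{(L)}]\to|I|^\alpha D^\alpha_\nu(\lambda)|Q|$; and (iii) $\sum_k\mathbb{P}(X_k^{(L)}\geq 2)\to 0$. Condition (i) follows from the $\alpha$-H\"older Wegner-type estimate $\mathbb{E}[X_k^{(L)}]\leq C\ell_L^d(|I|\beta_L^{-1})^\alpha$, which vanishes for $\theta<1$. Condition (ii) is where the subsequence $\{L_n\}$ enters: since $D^\alpha_\nu(\lambda)$ is a $\varlimsup$, one selects $L_n\to\infty$ so that the ratio in \eqref{frac_deri} evaluated at $\epsilon_n=|I|\beta_{L_n}^{-1}$ converges to $D^\alpha_\nu(\lambda)$. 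Summing the IDS contributions over the $\asymp L^d/\ell_L^d$ cubes intersecting $LQ$ and using $\beta_L=L^{d/\alpha}$ one then recovers the prefactor $|I|^\alpha D^\alpha_\nu(\lambda)|Q|$.

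The main obstacle is (iii), since the classical Minami estimate requires an absolutely continuous single-site distribution with bounded density and is not directly available under mere $\alpha$-H\"older continuity. The natural target is a double-Wegner bound of the form
\begin{equation*}
\mathbb{E}\bigl[X_k^{(L)}(X_k^{(L)}-1)\bigr]\leq C\bigl(\ell_L^d(|I|\beta_L^{-1})^\alpha\bigr)^2,
\end{equation*}
which I would try to establish by a two-coordinate resampling/rank-two perturbation argument on pairs $(\omega_n,\omega_m)$ inside $B_k$, applying the $\alpha$-H\"older hypothesis to each marginal integration and exploiting \eqref{exp} to decouple contributions away from the diagonal. Summing the resulting bound over $\asymp L^d/\ell_L^d$ blocks gives $\sum_k\mathbb{P}(X_k^{(L)}\geq 2)\lesssim \ell_L^d L^{-d}|I|^{2\alpha}\to 0$ as long as $\theta<1$. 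With (i)--(iii) in hand the Grigelionis theorem yields convergence of $\sum_k X_k^{(L_n)}$, hence of $\xi^\omega_{L_n,\lambda}(I\times Q)$, to a Poisson random variable with parameter $|I|^\alpha D^\alpha_\nu(\lambda)|Q|$.
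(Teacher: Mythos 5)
Your overall strategy is the same as the paper's: approximate $\xi^{\omega}_{L,\lambda}(I\times Q)$ in $L^1(\mathbb{P})$ by a sum of independent finite-volume counts over sub-cubes of side $\ell_L=L^{\theta}$, $0<\theta<1$, using the geometric resolvent identity and the fractional-moment bound \eqref{exp} to control boundary terms, and then prove a Poisson limit for the resulting triangular array; the paper carries out the last step by a direct characteristic-function expansion rather than by quoting a Grigelionis-type theorem, but the three conditions you list are exactly the quantities it controls, and the subsequence enters in both arguments through the $\varlimsup$ defining $D^\alpha_\nu(\lambda)$.

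The one substantive gap is your item (iii). You correctly identify that a Minami-type estimate is needed and that the classical versions assume a bounded density, but you leave the required two-point bound as something you ``would try to establish'' by a two-coordinate resampling; as written this is the missing ingredient, and it is not evident that the sketched rank-two argument produces the bound $\bigl(\ell_L^{d}\,S_\mu(\beta_L^{-1}|I|)\bigr)^{2}$ for a singular (merely $\alpha$-H\"older continuous) $\mu$. In fact nothing new needs to be proved: the generalized Minami estimate of Combes--Germinet--Klein \cite{JFA} (Theorem 2.1 there, quoted as inequality \eqref{Minami} of Lemma \ref{Minami-Wegner}) holds for arbitrary single-site measures with $Q_\mu(s)=8S_\mu(s)$, and uniform $\alpha$-H\"older continuity gives $S_\mu(s)\le Us^{\alpha}$; this is precisely how the paper bounds its remainder term $R_L$ by $O(L^{-2d}\ell_L^{2d})$ per block. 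A second, smaller gap is in (ii): you assert one can pick $L_n$ so that the ratio evaluated at the discrete scales $\epsilon_n=c\,\beta_{L_n}^{-1}$ converges to $D^\alpha_\nu(\lambda)$, but $D^\alpha_\nu(\lambda)$ is a $\varlimsup$ over continuous $\epsilon\to0$, so one must show that the $\varlimsup$ restricted to the sequence $\{\beta_L^{-1}\}$ coincides with the full one; this is the content of Lemma \ref{ProcessLimit}, proved via monotonicity of $\nu$ on nested symmetric intervals and $\beta_L/\beta_{L+1}\to1$. Without that argument you only obtain a limit bounded above by $|I|^{\alpha}D^\alpha_\nu(\lambda)|Q|$, not the stated Poisson parameter.
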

\begin{rem}
 For sequence $\{\xi^{\omega}_{L_n,\lambda}\big(I\times Q\big)\}_n$ to converge, the sequence $\{L_n\}_n$ depends only on $I$ and $\lambda$ but not on $Q$. This is because:
\begin{align*}
\gamma_{\lambda}(I\times Q)&=\lim_{n\rightarrow\infty}\Ex\left[\xi^\omega_{\lambda,L_n}(I\times Q)\right]\\
&=\lim_{n\rightarrow\infty}\Ex\left[\sum_{m\in L Q\cap\ZZ^d}\dprod{\delta_m}{E_{H^\omega}(\lambda+\beta_{L}^{-1}I)\delta_m}\right]\\
&=\lim_{n\rightarrow\infty}(|Q|L_n^d+o(L_n^{d-1}))\Ex[\dprod{\delta_0}{E_{H^\omega}(\lambda+\beta_{L_n}^{-1}I)\delta_0}]\\
&=|Q|\lim_{n\rightarrow\infty} L_n^d\Ex[\dprod{\delta_0}{E_{H^\omega}(\lambda+\beta_{L_n}^{-1}I)\delta_0}]\\
&=|Q|~\lim_{n\rightarrow\infty} L_n^d~\nu(\lambda+\beta_{L_n}^{-1}I)=|Q||I|^\alpha D^\alpha_\nu(\lambda)
\end{align*}
the limit is obtained through lemma \ref{ProcessLimit}.
\end{rem}
\noindent$D^\alpha_\nu$ is defined using symmetric intervals. In general, left and right $\alpha$-derivatives does not coincide with symmetric $\alpha$-derivative, while in case of usual derivative all three are same. It is also hard to determine the set $\{x: D^\alpha_\nu(x)>0\}$, so we have kept $D^\alpha_\nu(\lambda)>0$ in the hypothesis and considered the case of symmetric intervals only.
\\\\
\noindent To compute the limit of $\xi^\omega_{L,\lambda}$ as a random measure over a subsequence $\{L_n\}_n$, we should be able to compute $\lim\limits_{n\rightarrow\infty}\Ex_\omega[\xi^\omega_{L_n,\lambda}(I\times Q)]$ for any bounded interval $I$. Even if we consider $\xi^\omega_{L,\lambda}$ as random measure on the Borel $\sigma$-algebra $\mathscr{B}$ generated by $\{(-b,b)\setminus(-a,a):0<a<b<\infty\}$, we have to take different subsequences for different $I\in \mathscr{B}$. On other hand if $d^\alpha_\nu(\lambda)$ exists, then 
$$\lim_{L\rightarrow\infty}\Ex_\omega[\xi^\omega_{L,\lambda}(I\times Q)]= \alpha 2^{\alpha-1} d^\alpha_\nu(\lambda)|Q|\int_I x^{\alpha-1}dx.$$
where $I$ is a generator of $\mathscr{B}$. In this case one can prove convergence as random measures. As a special case, we can consider $\{\xi^\omega_{L_n,\lambda}(I\times\cdot)\}_n$ as random measure for fixed interval, then

\begin{cor}
\label{main_coro}
For a fixed symmetric bounded interval $I\subset\mathbb{R}$, we consider the random measure $\big\{\xi^{\omega}_{L,\lambda}\big(I\times \cdot\big)\big\}$ on $\mathbb{R}^d$. There exists a subsequence $\{L_n\}$ such that
$\big\{\xi^{\omega}_{L_n,\lambda}\big(I\times \cdot\big)\big\}$ converges weakly to a Poisson point process with intensity measure
$|I|^\alpha D^\alpha_\nu(\lambda)~dx$, where $dx$ is the Lebesgue measure on $\mathbb{R}^d$.
\end{cor}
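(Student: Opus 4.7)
The plan is to deduce Corollary \ref{main_coro} from Theorem \ref{main} by combining its conclusion with a Kallenberg-type criterion for Poisson convergence of random measures. First, fix the symmetric bounded interval $I$ and apply Theorem \ref{main} to extract a subsequence $\{L_n\}$ along which $\xi^\omega_{L_n,\lambda}(I\times Q)$ converges in distribution to a Poisson random variable with parameter $|I|^\alpha D^\alpha_\nu(\lambda)|Q|$ for every axis-parallel rectangle $Q\subset\RR^d$. The remark following Theorem \ref{main} shows that this subsequence can be chosen independently of $Q$, so a single $\{L_n\}$ works uniformly across all spatial test rectangles.

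The crucial step is to upgrade marginal Poisson convergence to joint Poisson convergence on any finite collection of pairwise disjoint axis-parallel rectangles $Q_1,\ldots,Q_k$, i.e.\ to show that $\big(\xi^\omega_{L_n,\lambda}(I\times Q_j)\big)_{j=1}^k$ converges in distribution to a vector of independent Poissons with parameters $|I|^\alpha D^\alpha_\nu(\lambda)|Q_j|$. The mechanism will be spatial decoupling based on \eqref{exp}: replace the spectral projection of $H^\omega$ in each count by the spectral projection of the finite-volume operator $H^\omega_{L_nQ_j}$; the resulting finite-volume traces depend on disjoint families of the i.i.d.\ potentials $\{\omega_n\}_{n\in\ZZ^d}$ and are therefore genuinely independent, while the discrepancy is controlled in fractional $L^s$ norm by resolvent identities combined with the exponential Green's function decay in \eqref{exp}. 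Since the one-dimensional marginal is already pinned by Theorem \ref{main}, the product structure survives passage to the limit.

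With joint convergence on disjoint axis-parallel rectangles in hand, Corollary \ref{main_coro} will follow from Kallenberg's theorem: axis-parallel rectangles form a convergence-determining dissecting semiring for the vague topology on locally finite measures on $\RR^d$, the limiting intensity $|I|^\alpha D^\alpha_\nu(\lambda)\,dx$ is absolutely continuous hence non-atomic, and therefore convergence of expectations over rectangles together with joint Poisson convergence on disjoint rectangles characterize weak convergence of random measures to the Poisson point process with that intensity. Tightness on compact sets is automatic from the intensity computation in the remark following Theorem \ref{main}, which yields a uniform bound on $\Ex[\xi^\omega_{L_n,\lambda}(I\times K)]$ for every compact $K\subset\RR^d$.

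The main obstacle is the asymptotic independence of the counts on disjoint rectangles. Theorem \ref{main} delivers only one-dimensional marginals, and propagating the product structure across disjoint spatial regions requires revisiting the proof of Theorem \ref{main} and extending its decoupling argument to the multi-region setting. The quantitative estimate will use the exponential decay in \eqref{exp} to bound boundary contributions when passing from $H^\omega$ to the block-diagonal operator $\bigoplus_j H^\omega_{L_nQ_j}$, and this is where the bulk of the remaining work lies; the Kallenberg step itself is essentially automatic once this asymptotic independence is established.
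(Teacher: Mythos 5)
Your route is workable in outline, but it is genuinely different from---and much heavier than---what the paper does, and the step you yourself flag as ``the bulk of the remaining work'' is exactly the step the paper never needs. The paper's proof of Corollary \ref{main_coro} is a one-liner: the limiting intensity $|I|^\alpha D^\alpha_\nu(\lambda)\,dx$ is diffuse, so the limiting Poisson process is \emph{simple}, and for simple limits Kallenberg's criterion (\cite[Theorem 16.16, condition (iv)]{kallen}) says that convergence in distribution of the \emph{one-dimensional} counts $\xi^{\omega}_{L_n,\lambda}(I\times U)$ for $U$ ranging over a dissecting ring (finite unions of axis-parallel rectangles, which Theorem \ref{main} together with the remark after it covers along a single subsequence depending only on $I$ and $\lambda$) is already equivalent to weak convergence of the point processes. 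So no joint Poisson convergence and no asymptotic-independence argument over disjoint rectangles is required at all; your ``crucial step'' is precisely what the cited theorem allows one to skip. As submitted, your proof is incomplete at that step: the multi-region decoupling of $H^\omega$ into $\bigoplus_j H^\omega_{L_nQ_j}$ with fractional-moment error bounds is described as a plan, not carried out, so the joint convergence on which your Kallenberg step relies is asserted rather than proved.

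If you do want to keep your route (which, once completed, proves convergence of all finite-dimensional distributions and hence the corollary), you should note that the paper's own machinery gives the independence for free, with no new resolvent estimates: by \eqref{characteristic} you may replace $\xi^{\omega}_{L,\lambda}(I\times Q_j)$ by $\eta^{\omega}_{L,\lambda}(I\times Q_j)=\sum_{p}\tilde{\eta}^{\omega}_{p,\lambda}(I\times Q_j)$ from \eqref{etal}, and since $\tilde{\eta}^{\omega}_{p,\lambda}$ charges only the single spatial point $p\,l_L/L$ and depends only on $\{\omega_n\}_{n\in B_p(L)}$, the counts over pairwise disjoint $Q_1,\dots,Q_k$ are functions of pairwise disjoint families of the i.i.d.\ potentials and hence \emph{exactly} independent for each $L$, not merely asymptotically so after a decoupling estimate. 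Combined with the marginal convergence from Theorem \ref{main}, this yields the joint limit directly; the comparison is that your proposal trades the paper's soft but decisive use of simplicity of the limit for a quantitative independence analysis that, in the end, is already encoded in the block decomposition the paper builds for the proof of Theorem \ref{main}.
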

\noindent Using (iv) of \cite[Theorem 16.16]{kallen}, the above Corollary is immediate once we have Theorem \ref{main}.
\\\\
\noindent Eigenvalue statistics for one dimension was studied by Molchanov \cite{MSA}, and later for higher dimension by Minami \cite{NM}. In region of fractional localization (where \eqref{exp} holds), they showed that the statistics is Poisson. Subsequently the Poisson statistics was shown for the trees by Aizenman-Warzel in \cite{AW} and recently Poisson statistics was obtained by Geisinger \cite{GL} for regular graphs. In recent results Germinet-Klopp \cite{GK} extended the results of \cite{NK}.
\\\\
\noindent Recently Kotani-Nakano \cite{KotNak} investigated the statistics for one dimensional decaying random Schr\"odinger operators on $L^2(\mathbb{R})$. An analogue of Minami's \cite{NM} work was done by Dolai-Krishna \cite{DK1} with $\alpha$-H\"{o}lder continuous single site distribution. In \cite{DK} Dolai-Krishna considered the Anderson Model with decaying Random Potentials and showed that the statistics inside $[-2d, 2d]$ in dimension $d\ge 3$ is independent of the randomness and agrees with that of the free part $\Delta$.
\section{Preliminaries}
Given $L$ large enough, define $l_L$ such that $l_L\approx L^a$ for some $0<a<1$. Define the boxes
\begin{equation}
\label{def_B_P}
 B_p(L)=\{x\in\mathbb{Z}^d: p_jl_L\leq x_j < (p_j+1)l_L,~ for~i=1,2,\cdots d\},~~p\in\mathbb{Z}^d.
\end{equation}
Let $H^{\omega}_{B_p(L)}$ denote the restriction of $H^{\omega}$ to $B_p(L)$. For $\lambda$ in localized regime, define the random measure $\eta^{\omega}_{p,\lambda}$ associated with $H^{\omega}_{B_p(L)}$ by:
\begin{equation}
\label{small_box_1}
\int_{\mathbb{R}\times\mathbb{R}^d}f(E,x)d\eta^{\omega}_{p,\lambda}(E,x)
=\sum_j\sum_{n\in B_p(L)}f\bigg(\beta_L(E_j-\lambda),\frac{n}{L}\bigg)|\psi_j(n)|^2,~f\in C_c(\mathbb{R}\times\mathbb{R}^d),
\end{equation}
where $\{E_j\}_j$ are the eigenvalues of $H^{\omega}_{B_p(L)}$ and $\psi_j$ are corresponding eigenfunctions.
Equivalently
\begin{equation}
 \label{small_box_2}
\eta^{\omega}_{p,\lambda}\big(I\times Q\big)=Tr\big(\chi_{_{LQ}}E_{H^{\omega}_{B_p(L)}}(\lambda+\beta_L^{-1}I)\chi_{_{LQ}}\big),
~~I\in\mathcal{B}_\mathbb{R}, Q\in\mathcal{B}_{\mathbb{R}^d}.
\end{equation}
Since $H^{\omega}_{B_p(L)}$ is a matrix, for $|I|<\infty$ and $|Q|<\infty$ we have,
$$\eta^{\omega}_{p,\lambda}\big(I\times Q\big)<\infty.$$
But it should be noted that it is not a point process.

\noindent Related to $B_p(L)$ we will need:
\begin{center}
 $\partial B_p(L)=\{x\in B_p(L) : \exists ~x'\in \mathbb{Z}^d\setminus B_p(L)~~such ~that~|x-x'|=1\}$\\~\\
$int(B_p(L))=\{x\in B_p(L) : dist (x,\partial B_p(L))>N_L\}$,
\end{center}
where $\{N_L\}_L$ is a increasing sequences of positive integer such that $N_L\approx \gamma \ln L$, we will specify $\gamma$ later. Observe
\begin{equation}
\label{boundary_est}
|B_p(L)\setminus int(B_p(L))|=O(l_L^{d-1}\ln L),~~ N_L\approx \gamma \ln L.
\end{equation}

\noindent Let $C_p(L)$ be the cube in $\mathbb{R}^d$ corresponding to $B_p(L)$ defined by
$$C_p(L)=\{x\in\mathbb{R}^d: p_jl_L\leq x_j < (p_j+1)l_L,~ for~i=1,2,\cdots d\},~~p\in\mathbb{Z}^d.$$
So $B_p(L)=C_p(L)\cap\mathbb{Z}^d$.

\noindent Observe that $\mathbb{Z}^d$ $(resp~\mathbb{R}^d)$ can be expressed as disjoint union of
$B_p(L)$ (respectively $C_p(L)$).\\
For a Borel set $Q$ of finite diameter (i.e $\sup\{|x-y|,x,y\in Q\}<\infty$), there exists a finite finite set $\Gamma$ such that $Q\subseteq \cup_{\Gamma\in p} C_p(L)$. Let $\Gamma_L\subset\mathbb{Z}^d$ be  such that
\begin{equation}
\label{parti}
 LQ=\bigcup_{p\in\Gamma_L}\bigg(C_p(L)\cap LQ\bigg)
\end{equation}
Then
$\{\eta^{\omega}_{p,\lambda}\}_{p\in\Gamma_L}$ are statistically independent. Also
\begin{equation}
 \label{nob}
|\Gamma_L| \leq \bigg(\frac{L}{l_L}\bigg)^d~~|Q|.
\end{equation}
In the following whenever we write sum over $p$, we mean the sum is taken over $\Gamma_L$.

\noindent  We will need Wegner and Minami type estimates given in Combes-Germinet-Klein \cite{JFA}. Hence following there notations, set $S_\mu(s)=\displaystyle\sup_{a\in\mathbb{R}}\mu[a,a+s]$ for probability measure $\mu$ and define
\begin{equation}
\label{def_Q}
Q_\mu(s) = \left\{
 \begin{array}{rl}
  \|\rho\|_\infty~s & ~\text{if} ~\mu~has ~bounded~ density\\
   8S_\mu(s) &~otherwise.\\
 \end{array} \right.
\end{equation}
If $\mu$ is uniformly $\alpha$-H\"{o}lder continuous with $0<\alpha\leq 1$, then $S_\mu(s)\leq U s^\alpha$ for small $s>0$ for some constant $U$.
Following estimates will be used:
\begin{lemma}
 \label{Minami-Wegner}
For all bounded interval $I\subset\mathbb{R}$ and  any finite volume $\Lambda\subset\mathbb{Z}^d$, we have
\begin{equation}
 \label{Wegner}
\mathbb{E}\big(\langle \delta_n, E_{H^{\omega}}(I)\delta_n\rangle\big) \leq Q_{\mu}(|I|)~\forall~n\in\mathbb{Z}^d,
\end{equation}
\begin{equation}
 \label{finite_volume}
\mathbb{E}\big(Tr(E_{H^\omega_\Lambda}(I))\big)\leq Q_{\mu}(|I|)~|\Lambda|,
\end{equation}
\begin{equation}
 \label{Minami}
\mathbb{E}\bigg(Tr(E_{H^\omega_\Lambda}(I))\big(Tr(E_{H^\omega_\Lambda}(I))-1\big)\bigg)\leq \bigg(Q_{\mu}(|I|)~|\Lambda|\bigg)^2.
\end{equation}
\end{lemma}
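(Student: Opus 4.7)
The plan is to establish the three estimates by exploiting spectral averaging with respect to the iid potential, replacing the usual bounded-density arguments by ones that work directly with the uniform $\alpha$-H\"older modulus $S_\mu$. In both the Wegner and Minami parts the structural step is the same: isolate one or two diagonal entries of $V^\omega$, view $H^\omega_\Lambda$ as a rank-one or rank-two perturbation of an operator independent of those entries, reduce a matrix element of $E_{H^\omega_\Lambda}(I)$ to an integral against the law(s) of the isolated entries, and then bound that integral using the input $S_\mu(s)\leq U s^\alpha$.

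For (\ref{Wegner}), I would condition on $(\omega_m)_{m\neq n}$, so that $H^\omega_\Lambda=H^{\omega,n}+\omega_n P_n$ with $P_n=|\delta_n\rangle\langle\delta_n|$ and $H^{\omega,n}$ independent of $\omega_n$. The function
$$F(t):=\dprod{\delta_n}{E_{H^{\omega,n}+tP_n}(I)\delta_n}$$
obeys the Aronszajn--Donoghue spectral averaging identity $\int_\RR F(t)\,dt=|I|$, and monotonicity of spectral projections under positive rank-one perturbations shows that $F$ is, up to reflection, the distribution function of a positive Borel measure of total mass at most $|I|$. Integrating against $\mu$ therefore gives $\int_\RR F(t)\,d\mu(t)\leq Q_\mu(|I|)$: it is $\|\rho\|_\infty|I|$ in the bounded-density case and $8S_\mu(|I|)$ in the H\"older case after a standard slicing argument that accounts for the factor $8$. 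Taking the remaining expectation over $(\omega_m)_{m\neq n}$ yields (\ref{Wegner}); summing over $n\in\Lambda$ and using $Tr(E_{H^\omega_\Lambda}(I))=\sum_{n\in\Lambda}\dprod{\delta_n}{E_{H^\omega_\Lambda}(I)\delta_n}$ gives (\ref{finite_volume}).

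For (\ref{Minami}), I would write $N(I):=Tr(E_{H^\omega_\Lambda}(I))$ and reformulate $N(I)(N(I)-1)$ as a sum over ordered pairs $(n,m)\in\Lambda^2$ with $n\neq m$ of two-vector matrix elements of $E_{H^\omega_\Lambda}(I)\otimes E_{H^\omega_\Lambda}(I)$ on the antisymmetric subspace spanned by $\delta_n\wedge\delta_m$. Viewing $H^\omega_\Lambda$ as a rank-two perturbation of the operator obtained by removing the diagonal entries at $n$ and $m$, Minami's determinantal identity, in the form given by Combes--Germinet--Klein \cite{JFA}, controls each such matrix element by a $2\times 2$ determinant built from the imaginary part of the reduced Green function. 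Integrating first in $\omega_n$ and then in $\omega_m$, each of the two integrals produces a factor of $Q_\mu(|I|)$ by the one-variable slicing argument used for (\ref{Wegner}); summing over ordered pairs and embedding the ordered sum into the full sum over $\Lambda^2$ gives $\bkt{Q_\mu(|I|)|\Lambda|}^2$, which is (\ref{Minami}).

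The main obstacle is the H\"older case of (\ref{Minami}): the classical Minami estimate is proved by integrating by parts against a density $\rho$ on a fixed compact interval, and that route is unavailable here. One must instead remain in the upper half-plane, work with the imaginary part of the $2\times 2$ resolvent block of the rank-two perturbation, and carry out the two slicings sequentially; the $8^2$ overhead absorbed into $Q_\mu(|I|)^2$ is precisely the price paid for foregoing smoothness of $\mu$. This is exactly the content of the CGK estimates \cite{JFA}, which I would invoke once the rank-one and rank-two reductions above are in place.
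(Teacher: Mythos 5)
The paper offers no argument of its own for Lemma \ref{Minami-Wegner}: all three bounds are quoted directly from Combes--Germinet--Klein \cite{JFA} (inequality (2.2) for \eqref{Wegner}, Theorem 2.3 for \eqref{finite_volume}, Theorem 2.1 for \eqref{Minami}). Since your argument also terminates in an appeal to \cite{JFA}, you are in substance taking the same route as the paper, and the lemma stands on that citation.

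Be aware, however, that the sketch you interpose would not survive as a standalone proof, so it should not be presented as one. First, the claim that $F(t)=\langle\delta_n,E_{H^{\omega,n}+tP_n}(I)\delta_n\rangle$ is, up to reflection, the distribution function of a positive measure ``by monotonicity of spectral projections under positive rank-one perturbations'' is false: for a \emph{bounded} interval $I$ the projection $E_{H^{\omega,n}+tP_n}(I)$ is not monotone in $t$ (eigenvalues enter and leave $I$ as $t$ increases), and all that spectral averaging gives is $0\le F\le 1$ together with $\int_{\mathbb{R}}F(t)\,dt\le|I|$. Second, that information alone does not yield $\int F\,d\mu\le 8S_\mu(|I|)$ by any ``slicing'': a function bounded by $1$ whose $dt$-integral is $|I|$ may be spread over $N$ intervals of length $|I|/N$, and for an $\alpha$-H\"older measure with $\alpha<1$ the resulting $\mu$-integral can be of order $N^{1-\alpha}|I|^\alpha$, which is unbounded in $N$. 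The factor $8$ in $Q_\mu(s)=8S_\mu(s)$ comes instead from the finer structure of the averaged resolvent, i.e.\ bounds of the type $\mathrm{Im}\,z\,\mathbb{E}[\mathrm{Im}\,G(z;n,n)]\le\pi(1+\tfrac{k}{2})S_\mu(\tfrac{2\,\mathrm{Im}\,z}{k})$ (the same inequality \eqref{diagonal} the paper uses later), which is precisely the Combes--Hislop--Klein/CGK input; likewise the two-site sequential averaging for \eqref{Minami} in the singular case is exactly \cite[Theorem 2.1]{JFA}, which your outline describes but does not replace. Since you invoke \cite{JFA} at the end, the conclusion is correct and the approach matches the paper; only the intermediate ``monotonicity plus slicing'' justification should be dropped or replaced by the actual CGK mechanism.
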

\noindent Proof can be found in Combes-Germinet-Klein \cite[inequality (2.2)]{JFA} for (\ref{Wegner}), \cite[Theorem 2.3]{JFA} for inequality (\ref{finite_volume}) and \cite[Theorem 2.1]{JFA} for the inequality (\ref{Minami}).\\
The following Corollary is immediate from the above lemma.
\begin{cor}
\label{integrability}
Consider $\nu$ the IDS of the operators $H^\omega$ satisfying the condition $({\bf a})$. Then for  any $\psi \in C_c(\mathbb{R})$ and $n\in\mathbb{Z}^d$ , we have
\begin{equation}
 \int_{\mathbb{R}}\psi(x)d\mathcal{\nu}(x)=\mathbb{E}\big(\langle \delta_n, \psi(H^{\omega})\delta_n\rangle\big)\leq \|\psi\|_{\infty}~Q_\mu(|s_\psi|), ~s_\psi=\mathrm{supp}~\psi.
\end{equation}
\begin{equation}
 \mathbb{E}\big(Tr(\psi(H^{\omega}_\Lambda))\big)\leq \|\psi\|_{\infty}~Q_\mu(|s_\psi|)~|\Lambda|.
\end{equation}
\end{cor}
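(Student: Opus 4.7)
The corollary follows directly from Lemma \ref{Minami-Wegner} combined with the spectral theorem, and the plan is essentially three short steps.

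\textbf{Step 1 (Identity).} I would first establish $\int_{\mathbb{R}}\psi(x)d\nu(x)=\mathbb{E}\bigl(\langle\delta_n,\psi(H^\omega)\delta_n\rangle\bigr)$. The IDS is defined (via the Pastur--Shubin formula / translation invariance of the ergodic family $\{H^\omega\}$) so that for every bounded Borel interval $I$ and every $n\in\mathbb{Z}^d$,
$$\nu(I)=\mathbb{E}\bigl(\langle\delta_n,E_{H^\omega}(I)\delta_n\rangle\bigr),$$
the independence of $n$ coming from $H^{\tau_n\omega}=U_nH^\omega U_n^*$ together with the stationarity of $\mathbb{P}$. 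Extending by linearity, dominated convergence, and the functional calculus from indicator functions of intervals to arbitrary $\psi\in C_c(\mathbb{R})$, one obtains the stated identity.

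\textbf{Step 2 (Single-site bound).} Since $|\psi(x)|\leq \|\psi\|_\infty\chi_{s_\psi}(x)$ pointwise in $x$, the spectral theorem applied to the spectral measure $\mu^\omega_{\delta_n}$ of $\delta_n$ gives
$$\bigl|\langle\delta_n,\psi(H^\omega)\delta_n\rangle\bigr|\leq \|\psi\|_\infty\,\langle\delta_n,E_{H^\omega}(s_\psi)\delta_n\rangle.$$
Replacing $s_\psi$ by its convex hull (an interval of length $|s_\psi|$) and using monotonicity of spectral projections, one takes $\mathbb{E}$ on both sides and invokes the Wegner-type inequality \eqref{Wegner} to conclude $\mathbb{E}\bigl(\langle\delta_n,\psi(H^\omega)\delta_n\rangle\bigr)\leq \|\psi\|_\infty Q_\mu(|s_\psi|)$.

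\textbf{Step 3 (Finite-volume trace bound).} The same pointwise inequality lifts via functional calculus on $H^\omega_\Lambda$ to the operator inequality
$$-\|\psi\|_\infty E_{H^\omega_\Lambda}(s_\psi)\;\leq\;\psi(H^\omega_\Lambda)\;\leq\;\|\psi\|_\infty E_{H^\omega_\Lambda}(s_\psi)$$
on $\ell^2(\Lambda)$. Taking the trace (monotone on positive operators) and then expectations gives $\mathbb{E}\bigl(|Tr(\psi(H^\omega_\Lambda))|\bigr)\leq \|\psi\|_\infty\mathbb{E}\bigl(Tr(E_{H^\omega_\Lambda}(s_\psi))\bigr)$, and \eqref{finite_volume} immediately yields the claimed bound $\|\psi\|_\infty Q_\mu(|s_\psi|)|\Lambda|$.

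There is no genuine obstacle here; this is a packaging step. The only minor bookkeeping point is that the inequalities in Lemma \ref{Minami-Wegner} are stated for intervals, while $s_\psi$ is merely a compact set, so one must pass to the convex hull of $s_\psi$ and interpret $|s_\psi|$ as its diameter; this only improves the constants. Everything else is a one-line application of the spectral theorem followed by Lemma \ref{Minami-Wegner}.
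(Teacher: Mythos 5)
Your proposal is correct and matches the paper's intent: the paper offers no written proof, simply declaring the corollary ``immediate'' from Lemma \ref{Minami-Wegner}, and your three steps (the Pastur--Shubin identity, the pointwise bound $|\psi|\leq\|\psi\|_\infty\chi_{s_\psi}$ fed through the spectral theorem, and then \eqref{Wegner} and \eqref{finite_volume}) are exactly the routine details being elided. Your remark about replacing $s_\psi$ by its convex hull is the right way to reconcile the interval hypothesis in Lemma \ref{Minami-Wegner} with a general compact support, though note that reading $|s_\psi|$ as the diameter gives a formally weaker (not stronger) constant than reading it as Lebesgue measure, since $Q_\mu$ is monotone.
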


\begin{prop}
 \label{pro1}
For any $f\in C_c(\mathbb{R}\times\mathbb{R}^d)$ we have,
\begin{equation}
\label{eqiv_con_1}
\mathbb{E}^{\omega}\bigg\{\bigg|\int f(E,x)d\xi^{\omega}_{L,\lambda}(E,x)-\sum_p\int f(E,x)d\eta^{\omega}_{p,\lambda}(E,x)\bigg|\bigg\}
\longrightarrow 0~~as~~L\to\infty.
\end{equation}
\end{prop}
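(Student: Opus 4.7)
The plan is to first reduce $f\in C_c(\RR\times\RR^d)$ to a product $f=f_1\otimes f_2$ with $f_1\in C_c^\infty(\RR)$ and $f_2\in C_c(\RR^d)$ via Stone--Weierstrass; the approximation error will be absorbed in expectation by the uniform-in-$L$ bound on $\Ex[\xi^\omega_{L,\lambda}(\mathrm{supp}\,f)]$ that follows from \eqref{finite_volume} once $\beta_L=L^{d/\alpha}$ and $\alpha$-H\"older continuity are used. For a product $f_1\otimes f_2$, setting $f_L(E):=f_1(\beta_L(E-\lambda))$ and letting $p(n)$ denote the unique $p\in\Gamma_L$ with $n\in B_{p}(L)$, the integrand in \eqref{eqiv_con_1} becomes
\begin{equation*}
\sum_{n\in\bigcup_{p\in\Gamma_L}B_p(L)}f_2(n/L)\bigl[\dprod{\delta_n}{f_L(H^\omega)\delta_n}-\dprod{\delta_n}{f_L(H^\omega_{B_{p(n)}(L)})\delta_n}\bigr],
\end{equation*}
and I would split this sum by whether $n\in\mathrm{int}(B_{p(n)}(L))$ or $n$ lies in the boundary layer $B_{p(n)}(L)\setminus\mathrm{int}(B_{p(n)}(L))$.

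For the boundary layer I would bound each matrix element by $\|f_1\|_\infty\dprod{\delta_n}{E_{H^\omega}(\lambda+\beta_L^{-1}\mathrm{supp}\,f_1)\delta_n}$ (and the analogue for $H^\omega_{B_p(L)}$), and use Wegner \eqref{Wegner} together with $Q_\mu(s)\leq Us^\alpha$ and $\beta_L=L^{d/\alpha}$ to get expected size $O(L^{-d})$ per site. Combining this with \eqref{boundary_est} and \eqref{nob}, which give $O(L^d\ln L/l_L)$ boundary sites in total, the layer contributes $O(\ln L/l_L)=o(1)$.

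For the interior I would apply the Helffer--Sj\"ostrand formula together with the resolvent identity. Writing $\tilde H^\omega_{B_p(L)}:=H^\omega_{B_p(L)}\oplus H^\omega_{B_p(L)^c}$, $\Gamma_p:=H^\omega-\tilde H^\omega_{B_p(L)}$ (the coupling across $\partial B_p(L)$), and picking an almost-analytic extension $\tilde f_L$ of $f_L$, one obtains
\begin{equation*}
\dprod{\delta_n}{f_L(H^\omega)\delta_n}-\dprod{\delta_n}{f_L(H^\omega_{B_p(L)})\delta_n}=-\tfrac{1}{\pi}\int_{\CC}\partial_{\bar z}\tilde f_L(z)\sum_{\substack{x\notin B_p(L),\,y\in\partial B_p(L)\\|x-y|=1}}G(z;n,x)\,G^{B_p(L)}(z;y,n)\,dm(z).
\end{equation*}
For $n\in\mathrm{int}(B_p(L))$ both $|n-x|$ and $|n-y|$ exceed $N_L-1$, so taking $s\in(0,\tfrac12)$ and using $|GG^{B_p(L)}|\leq|\mathrm{Im}\,z|^{-2(1-s)}|G|^s|G^{B_p(L)}|^s$ together with Cauchy--Schwarz in $\omega$ and hypothesis \textbf{(b)} at exponent $2s$, I get $\Ex[|G(z;n,x)G^{B_p(L)}(z;y,n)|]\leq C|\mathrm{Im}\,z|^{-2(1-s)}e^{-r'N_L}$ for some $r'>0$. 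Choosing $\tilde f_L(z):=\tilde f_1(\beta_L(z-\lambda))\chi(\beta_L\mathrm{Im}\,z)$ with a smooth cutoff $\chi$ and $|\partial_{\bar w}\tilde f_1(w)|\leq C_N|\mathrm{Im}\,w|^N$ for suitable $N$ makes the $dm(z)$-integral bounded by a fixed power of $\beta_L=L^{d/\alpha}$ times $e^{-r'N_L}$. Summing over the $O(l_L^{d-1})$ boundary edges per box and the $O(L^d)$ interior sites, and setting $N_L\approx\gamma\ln L$ with $\gamma$ sufficiently large, this part is also $o(1)$.

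The main obstacle will be this last Helffer--Sj\"ostrand step: the $\beta_L$-rescaling in $\tilde f_L$ inflates the integral by polynomial powers of $L$, and these polynomial losses must be dominated by the exponential factor $e^{-r'N_L}=L^{-r'\gamma}$. The parameters $s$, $N$, the exponent $a$ in $l_L\approx L^a$, and $\gamma$ therefore have to be chosen consistently so that every polynomial-in-$L$ factor is absorbed by the fractional-moment decay.
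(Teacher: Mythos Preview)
Your proposal is correct and follows the same architecture as the paper: Stone--Weierstrass reduction to product functions, the same interior/boundary-layer split, Wegner for the boundary layer, and a geometric resolvent identity plus fractional-moment decay for the interior. The difference lies in how the energy variable is handled.

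The paper does not take $f_1\in C_c^\infty(\RR)$. Instead it reduces to the special functions $h(E)=\mathrm{Im}\,(E-z)^{-1}$ with $\mathrm{Im}\,z>0$, invoking the ``Stone--Weierstrass Gavotte'' of \cite{HKB}. For such $h$, $\langle\delta_n,h_L(H^\omega)\delta_n\rangle=\beta_L^{-1}\,\mathrm{Im}\,G(z_L;n,n)$ is already a Green's function, so the interior term is estimated directly from the resolvent identity \eqref{pertur} with \emph{no} Helffer--Sj\"ostrand integral at all. They then write $|G\,G^{B_p}|=|G|\cdot|G^{B_p}|^{s}\cdot|G^{B_p}|^{1-s}$, bound the first and third factors trivially by $|\mathrm{Im}\,z_L|^{-1}$ and $|\mathrm{Im}\,z_L|^{-(1-s)}$, and use {\bf (b)} on the single factor $|G^{B_p}|^{s}$; no Cauchy--Schwarz in $\omega$ is needed. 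The boundary term is handled via the a~priori bound \eqref{diagonal} on $\mathrm{Im}\,G$ (necessary because $h$ is not compactly supported), which plays the same role your direct Wegner estimate does.

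What each approach buys: the paper's choice eliminates precisely the step you identify as the main obstacle---there is no $dm(z)$ integral to control and hence no competition between the $\beta_L$-blowup of the almost-analytic extension and $e^{-r'N_L}$; the polynomial loss is simply $|\mathrm{Im}\,z_L|^{-(2-s)}=\beta_L^{2-s}$, and the condition on $\gamma$ is read off immediately. Your route avoids the somewhat nonstandard density of Poisson kernels in $C_c$ and keeps $f_1$ compactly supported, so the boundary-layer estimate is literally \eqref{Wegner}/\eqref{finite_volume} rather than \eqref{diagonal}; the price is the Helffer--Sj\"ostrand bookkeeping, which, as you observe, does go through once $N$, $s<\tfrac12$, $a$, and $\gamma$ are chosen consistently.
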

\begin{proof}
Take $f(E,x)=h(E)g(x)$ where $g$ is continuous function with compact support on $\mathbb{R}^d$ and $h$ is of the form
\begin{equation}
\label{resolv_func}
 h(E)=Im\frac{1}{E-z},~Imz>0.
\end{equation}
Since linear combination of functions of form $f$ are dense in $C_c(\mathbb{R}\times \mathbb{R}^d)$, to prove (\ref{eqiv_con_1}) it is sufficient to 
prove for $f$, see \cite[Appendix: The Stone-Weierstrass Gavotte]{HKB} for details. Let supp $g=Q\subset\mathbb{R}^d$ (because $g$ has compact support, we have $\sup\{|x-y|,x,y\in Q\}<\infty$ and $|Q|<\infty$).
We have
\begin{align}
\label{eq1}
\int f(E,x)d\xi^{\omega}_{L,\lambda}(E,x) &=\sum_n g_L(n)\langle \delta_n, h_L(H^{\omega})\delta_n\rangle\\
  &=\frac{1}{\beta_L}\sum_n g_L(n) ImG(z_L;n,n).\nonumber
\end{align}
and
\begin{align}
 \label{eq2}
\sum_p\int f(E,x)d\eta^{\omega}_{p,\lambda}(E,x) &=\sum_p \sum_n g_L(n)\langle \delta_n, h_L(H^{\omega})\delta_n\rangle\\
   &=\frac{1}{\beta_L}\sum_p \sum_n g_L(n) ImG^{B_p}(z_L;n,n),~~B_p=B_p(L)\nonumber
\end{align}
where $g_L(x)=g\big(\frac{x}{L}\big)$, $z_L=\beta_L^{-1}z,~Imz>0$ and $h_L$ is given by
$$h_L(E)=h\big(\beta_L(E-\lambda)\big)=\frac{1}{\beta_L}Im\frac{1}{E-\lambda-\beta_L^{-1}z}.$$
The support of $g_L$ is $LQ$, so from the inequalities (\ref{parti}) and (\ref{nob}) we see that the support of $g_L$ intersect only $O \big(\frac{L}{l_L}\big)^d~(~i.e~|\Gamma_L|)$ many disjoint cubes $B_p(L)$. So from (\ref{eq1}) and (\ref{eq2}) we have 
\begin{align}
 \label{pe4}
\bigg |\int fd\xi^{\omega}_{L,\lambda}-\sum_p\int fd\eta^{\omega}_{p,\lambda}\bigg| 
&=\frac{1}{\beta_L}\bigg|\sum_{n\in LQ}g_L(n) ImG(z_L;n,n)-\sum_{p\in\Gamma_L}\sum_{n\in B_p(L)} g_L(n) ImG^{B_p}(z_L;n,n)\bigg|\\
&\leq \frac{\|g\|_{\infty}}{\beta_L}\sum_{p\in\Gamma_L}\sum_{n\in B_p(L)} \big|ImG(z_L;n,n)-ImG^{B_p}(z_L;n,n)\big|.\nonumber
\end{align}
For $n\in int(B_p(L))$ and $z\in \mathbb{C}^+$, we have the perturbation formula
\begin{equation}
 \label{pertur}
G(z_L;n,n)-G^{B_p}(z_L;n,n)=\sum_{(m,k)\in \partial B_p(L)}G(z_L;n,k)G^{B_p}(z_L;m,n),
\end{equation}
$(m,k)\in \partial B_p(L)$ means $m\in \partial B_p(L)$, $k\in\mathbb{Z}^d\setminus B_p(L)$ such that $|m-k|=1$.
Following steps from Minami \cite{NM}, we use (\ref{pertur}) in (\ref{pe4}) and get
\begin{align}
 \label{differ}
\bigg |\int fd\xi^{\omega}_{L,\lambda}-\sum_p\int fd\eta^{\omega}_{p,\lambda}\bigg|&\leq \frac{\|g\|_{\infty}}{\beta_L}
\sum_{p\in\Gamma_L}\sum_{n\in B_p\setminus int(B_p)}\big[ImG(z_L;n,n)+ImG^{B_p}(z_L;n,n)\big]\\
        &\qquad +\frac{\|g\|_{\infty}}{\beta_L}\sum_{p\in\Gamma_L}\sum_{n\in int(B_p(L))}\sum_{(m,k)\in \partial B_p(L)}
\big|G(z_L;n,k)G^{B_p}(z_L;m,n)\big|\nonumber\\
        & =A_L+B_L\nonumber.
\end{align}
For $B_L$ we have
\begin{align}
 \label{estb}
B_L&=\frac{\|g\|_{\infty}}{\beta_L}\sum_{p\in\Gamma_L}\sum_{n\in int(B_p(L))}\sum_{(m,k)\in \partial B_p(L)}\big|G(z_L;n,k)G^{B_p}(z_L;m,n)\big|\\
&=\frac{\|g\|_{\infty}}{\beta_L}\sum_{p\in\Gamma_L}\sum_{n\in int(B_p(L))}\sum_{(m,k)\in \partial B_p(L)}|G(z_L;n,k)| |G^{B_p}(z_L;m,n)|^s|G^{B_p}(z_L;m,n)|^{1-s}.\nonumber
\end{align}
Now $(m,k)\in \partial B_p(L)$ and $n\in int(B_p(L))$ so we have $|n-k|>N_L$, using the exponential decay of Green's function given in (\ref{exp})
we have 
\begin{equation}
 \label{expdec}
\mathbb{E}^{\omega}(|G^{B_p}(z_L;n,k)|^s)\leq Ce^{-rN_L},
\end{equation}
we also have
$$|G(z_L;n,k)|\leq \frac{1}{|Imz_L|}~~and~~|G^{B_p}(z_L;m,n)|^{1-s}\leq \frac{1}{|Imz_L|^{1-s}}.$$
So using above together with (\ref{expdec}) in (\ref{estb}) we get
\begin{equation}
 \label{estimaB}
\mathbb{E}^{\omega}(B_L) \leq \frac{C~\|g\|_{\infty}}{\beta_L|Imz_L|^{2-s}}|\Gamma_L|~l_L^d ~l_L^{d-1}~N_L~e^{-rN_L}.
\end{equation}
We have $l_L\simeq L^a~(0<a<1)$,~$\Gamma_L=O\big(\frac{L}{l_L}\big)^d$, $Imz_L=\beta_L^{-1}\tau$, $\tau>0$ taking $z=\sigma+i\tau$, and
$\beta_L=L^{d/\alpha}$. Choose $\gamma$ so that
$$\gamma>\frac{1}{r}\left[(1-s)\frac{d}{\alpha}+d+(d-1)a\right]$$
in definition of $N_L$ in \eqref{boundary_est}. Then from (\ref{estimaB}) we get
\begin{equation}
 \label{estiB}
\mathbb{E}^{\omega}(B_L)=O(\gamma L^{-\delta}lnL), ~\delta=r\gamma-\big[(1-s)d/\alpha+d+(d-1)a\big]>0.
\end{equation}
From Combes-Germinet-Klein \cite[A.9]{JFA}  we have, for any $k>0$
\begin{equation}
 \label{diagonal}
Imz~\mathbb{E}[Im G^\Lambda(z;n,n)]\leq \pi\bigg(1+\frac{k}{2}\bigg)S_\mu\bigg(\frac{2~Imz}{k}\bigg).
\end{equation}
Since $Imz_L=\beta_L^{-1}Imz$ with $Im z>0$ so using $S_\mu(s)\leq U s^\alpha$ ($\alpha$-H\"{o}lder continuity of $\mu$) we get 
\begin{align}
 \label{boundary}
\frac{1}{\beta_L}\mathbb{E}\bigg[Im G^{\Lambda}(z_L;n,n)\bigg] &\leq \frac{1}{Imz}~\pi\bigg(1+\frac{k}{2}\bigg)S_\mu\bigg(\frac{2~Imz_L}{k}\bigg),~~\Lambda=C_p,~\Lambda_L\\
&\leq C ~\bigg(\frac{2~\beta_L^{-1}~Imz}{k}\bigg)^{\alpha}\nonumber\\
&\leq C ~L^{-d},~~(\mathrm{since} ~~ \beta_L=L^{d/\alpha})\nonumber.
\end{align}
From (\ref{differ}) and (\ref{boundary_est}) we have
\begin{align}
 \label{estiA}
\mathbb{E}^{\omega}(A_L) &\leq 2 C\frac{\|g\|_{\infty}}{\beta_L}~|\Gamma_L|~|B_p(L)\setminus int B_p(L)|N_L~L^{-d}\\
&\approx C~\bigg(\frac{L}{l_L}\bigg)^d l_L^{d-1}\gamma lnL~L^{-d}\nonumber\\
&=O(L^{-a}lnL),~~l_l=L^a, 0<a<1\nonumber.
\end{align}
Combining (\ref{estiB}) and (\ref{estiA}) gives
$$\mathbb{E}^{\omega}(A_L)+\mathbb{E}^{\omega}(B_L)\xrightarrow{L\rightarrow\infty} 0$$
The above convergence together with (\ref{differ}) completes the proof.\\
\end{proof}

For ease of computation we will define the point process $\tilde{\eta}^\omega_{p,\lambda}$,
\begin{align}
 \label{process}
 \int_{\mathbb{R}\times\mathbb{R}^d}f(E,x)d\tilde{\eta}^{\omega}_{p,\lambda}(E,x)&=\sum_jf\bigg(\beta_L(E_j-\lambda),\frac{pl_L}{L}\bigg),\\
&=Tr\left(f\left(\beta_L(H^\omega_{B_p(L)}-\lambda),\frac{pl_L}{L}\right)\right)\nonumber
\end{align}
where $\{E_j\}_j$ are eigenvalues of $H^\omega_{B_p(L)}$ (following notation of \eqref{PhyNot} we can write $d\tilde{\eta}^\omega_{p,\lambda}(E,x)=\sum_j \delta(\beta_L(E_j-\lambda)-E)dE\delta(x-\frac{p l_L}{L})dx$). One can prove
\begin{equation}
 \label{eqiv_con_2}
 \mathbb{E}^{\omega}\bigg[\bigg |\int fd\xi^{\omega}_{L,\lambda}-\sum_p\int fd\tilde{\eta}^{\omega}_{p,\lambda}\bigg|\bigg] \longrightarrow 0~~as~~L\to\infty
 ,~~f\in C_c(\mathbb{R}\times\mathbb{R}^d)
\end{equation}
by observing that, for $f(E,x)=h(E)g(x)$ where $g\in C_c(\RR^d)$ and $h$ is given by \eqref{resolv_func}, we have
\begin{align}
\label{esti_point}
&\bigg |\int fd\xi^{\omega}_{L,\lambda}-\sum_p\int fd\tilde{\eta}^{\omega}_{p,\lambda}\bigg| \leq \bigg |\int fd\xi^{\omega}_{L,\lambda}-\sum_p\int fd\eta^{\omega}_{p,\lambda}\bigg| +\sum_p\bigg|\int fd\eta^{\omega}_{p,\lambda}-\int fd\tilde{\eta}^{\omega}_{p,\lambda}\bigg|\\
&\qquad\leq \bigg |\int fd\xi^{\omega}_{L,\lambda}-\sum_p\int fd\eta^{\omega}_{p,\lambda}\bigg|\nonumber\\
&\qquad\qquad+\nonumber\sum_p \frac{\max\limits_{n\in B_p(L)}\left|g\left(\frac{n}{L}\right)-g\left(\frac{pl_L}{L}\right)\right|}{\beta_L}\sum_{n\in B_p(L)}\left|Im G(z_L;n,n)-Im G^{B_p}(z_L;n,n)\right|
\end{align}
Repeating the steps of proposition \ref{pro1}, and using density of function of type $f$ in $C_c(\mathbb{R}\times\mathbb{R}^d)$ we have \eqref{eqiv_con_2}. So using $\tilde{\eta}^\omega_{p,\lambda}$ one can note that any limit point of $\xi^\omega_{L,\lambda}$ is a limit point of the point process define by:
\begin{equation}
 \label{etal}
\eta^{\omega}_{L,\lambda}:=\sum_p\tilde{\eta}^{\omega}_{p,\lambda}.
\end{equation}

\begin{rem}
For bounded interval $I\subset\mathbb{R}$ and bounded set $Q\subset\mathbb{R}^d$, using the equation \eqref{eqiv_con_2} on $\chi_I(E)\chi_Q(x)$ we have
\begin{equation}
  \label{characteristic}
\mathbb{E}\big(\big|\xi^{\omega}_{L,\lambda}(I\times Q)-\eta^{\omega}_{L,\lambda}(I\times Q)\big|\big)\rightarrow 0~~as~~L\to\infty.
\end{equation}
\end{rem}

\begin{lemma}
 \label{eqiv_weak_con}
 The weak convergence of $\{\xi^{\omega}_{L,\lambda}\}$ and $\{\eta^{\omega}_{L,\lambda}\}$ are equivalent, i.e
 \begin{equation}
  \label{eqiva}
  \lim_{L\to\infty}\mathbb{E}^{\omega}\bigg[\bigg|e^{-\int fd\xi^{\omega}_{L,\lambda}}-e^{-\int fd\eta^{\omega}_{L,\lambda}}\bigg|\bigg]=0,
  ~~\forall~f\in C_c^{+}(\mathbb{R}\times\mathbb{R}^d).
 \end{equation}
\end{lemma}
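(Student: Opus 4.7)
The plan is to reduce the claim directly to the $L^1$-type estimate \eqref{eqiv_con_2} that has already been established. The key observation is that the map $x\mapsto e^{-x}$ is $1$-Lipschitz on $[0,\infty)$, since its derivative has modulus at most $1$ there. For any $f\in C_c^+(\RR\times\RR^d)$ both integrals $\int f\,d\xi^\omega_{L,\lambda}$ and $\int f\,d\eta^\omega_{L,\lambda}$ are non-negative (the first because $f\geq0$ and the measure $\xi^\omega_{L,\lambda}$ is positive by \eqref{def1}; the second because $\eta^\omega_{L,\lambda}=\sum_p \tilde\eta^\omega_{p,\lambda}$ is a sum of positive measures). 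Hence I would use the pointwise bound
$$\bigg|e^{-\int f\,d\xi^{\omega}_{L,\lambda}}-e^{-\int f\,d\eta^{\omega}_{L,\lambda}}\bigg|\;\leq\;\bigg|\int f\,d\xi^{\omega}_{L,\lambda}-\int f\,d\eta^{\omega}_{L,\lambda}\bigg|.$$

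Taking expectation on both sides and recalling that $\eta^\omega_{L,\lambda}=\sum_{p}\tilde\eta^\omega_{p,\lambda}$, the right-hand side is exactly
$$\Ex^\omega\!\bigg[\bigg|\int f\,d\xi^{\omega}_{L,\lambda}-\sum_p\int f\,d\tilde\eta^{\omega}_{p,\lambda}\bigg|\bigg],$$
which tends to $0$ as $L\to\infty$ by \eqref{eqiv_con_2} (note that $C_c^+\subset C_c$, so \eqref{eqiv_con_2} applies). This establishes \eqref{eqiva}.

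There is essentially no obstacle: the only subtlety is making sure one does not lose the Lipschitz constant by trying to work with $|e^{-a}-e^{-b}|$ on all of $\RR$ (where the bound fails). The positivity of $f$ in the hypothesis $C_c^+$ is precisely what is needed to keep the integrals in $[0,\infty)$ and thus to invoke the $1$-Lipschitz property. Once this observation is in place, the proof is a one-line application of \eqref{eqiv_con_2}, and no new estimate on Green's functions, boundary sums, or Wegner/Minami type bounds is required.
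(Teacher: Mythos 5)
Your argument is correct and is exactly the paper's proof: the authors likewise apply the bound $|e^{-x}-e^{-y}|\leq|x-y|$ for $x,y\geq0$ (valid since $f\in C_c^{+}$ makes both integrals non-negative) and then invoke \eqref{eqiv_con_2}. Nothing further is needed.
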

\begin{proof}
We have $|e^{-x}-e^{-y}|<|x-y|$ for $x,y>0$; then using this together with (\ref{eqiv_con_2}) will give  (\ref{eqiva}). Hence the lemma.\\
\end{proof}
\begin{lemma}\label{ProcessLimit}
Given $D^\alpha_\nu(\lambda)>0$ and a symmetric bounded interval $I\subset\mathbb{R}$, there exists a sequence $\{L_n\}_n$ such that 
\begin{equation}\label{PrcLimEqn1}
\lim_{n\to\infty}L_n^d\nu(\lambda+\beta_{L_n}^{-1}I)=|I|^\alpha D^\alpha_\nu(\lambda).
\end{equation}
\end{lemma}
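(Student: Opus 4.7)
The plan is to recognize that the statement is, after a change of variables, essentially the definition of $D^\alpha_\nu(\lambda)$ as a $\varlimsup$. Writing the symmetric interval as $I=(-a,a)$ with $|I|=2a$, set $\epsilon_L := a\beta_L^{-1}=aL^{-d/\alpha}$, so that
$$\lambda+\beta_L^{-1}I=(\lambda-\epsilon_L,\lambda+\epsilon_L)\qquad\text{and}\qquad (2\epsilon_L)^\alpha=(2a)^\alpha L^{-d}=|I|^\alpha L^{-d}.$$
Dividing and multiplying, one obtains the identity
$$L^d\nu(\lambda+\beta_L^{-1}I)=|I|^\alpha\cdot\frac{\nu(\lambda-\epsilon_L,\lambda+\epsilon_L)}{(2\epsilon_L)^\alpha},$$
which is the central observation: the quantity on the left is, up to the factor $|I|^\alpha$, exactly the expression whose $\varlimsup$ defines $D^\alpha_\nu(\lambda)$.

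Now by definition of $D^\alpha_\nu(\lambda)=\varlimsup_{\epsilon\to 0^+}\nu(\lambda-\epsilon,\lambda+\epsilon)/(2\epsilon)^\alpha$, there exists a sequence $\epsilon_n\downarrow 0$ along which this ratio converges to $D^\alpha_\nu(\lambda)$. The map $L\mapsto \epsilon_L=aL^{-d/\alpha}$ is a continuous decreasing bijection of $(0,\infty)$ onto itself, so one may simply set $L_n:=(a/\epsilon_n)^{\alpha/d}$; then $L_n\to\infty$, $\epsilon_{L_n}=\epsilon_n$, and substituting into the displayed identity yields \eqref{PrcLimEqn1}.

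There is essentially no obstacle here: the hypothesis $0<D^\alpha_\nu(\lambda)<\infty$ (from Theorem \ref{main}) is only needed to know the $\varlimsup$ is attained by a nontrivial sequence, which it is by definition in any case. If one insists on $L_n\in\mathbb{Z}_+$ (the paper treats $L$ as a real scaling parameter, but for safety) then one rounds $(a/\epsilon_n)^{\alpha/d}$ to a nearby positive integer and invokes the fact that $\epsilon\mapsto \nu(\lambda-\epsilon,\lambda+\epsilon)/(2\epsilon)^\alpha$ is continuous on $(0,\infty)$ because $\nu$ is ($\alpha$-H\"older) continuous; hence the rounding error in $F(\epsilon_{L_n})-F(\epsilon_n)$ can be made arbitrarily small, preserving the limit.
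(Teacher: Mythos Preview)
Your argument is correct, and the central identity $L^d\nu(\lambda+\beta_L^{-1}I)=|I|^\alpha\,\nu(\lambda-\epsilon_L,\lambda+\epsilon_L)/(2\epsilon_L)^\alpha$ is exactly the point. The difference from the paper is in how the integer constraint on $L$ is handled. The paper treats $L\in\ZZ_+$ throughout, and its entire proof of this lemma is a sandwiching argument bridging the continuous $\varlimsup_{\epsilon\to 0}$ defining $D^\alpha_\nu(\lambda)$ and the discrete $\varlimsup_{L\to\infty,\,L\in\ZZ_+}$: for $\epsilon\in(\beta_{L+1}^{-1},\beta_L^{-1}]$ one has, by monotonicity of $\nu$ alone, bounds relating $\nu(\lambda+\epsilon I)$ to $\nu(\lambda+\beta_L^{-1}I)$ and $\nu(\lambda+\beta_{L+1}^{-1}I)$, with correction factors $(\beta_L/\beta_{L+1})^\alpha=(L/(L+1))^d\to 1$. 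Your route is more direct: the real-$L$ case is tautological, and you round to integers afterwards. That rounding step is right but slightly understated: continuity of $F(\epsilon)=\nu(\lambda-\epsilon,\lambda+\epsilon)/(2\epsilon)^\alpha$ on $(0,\infty)$ is not by itself enough, since $\epsilon_n\to 0$ and there is no uniform continuity near $0$; what one actually uses is the $\alpha$-H\"older bound on $\nu$ to get $|F(c\epsilon)-c^{-\alpha}F(\epsilon)|\le C\,|c-1|^\alpha/c^\alpha$, together with $c_n:=\epsilon_{\tilde L_n}/\epsilon_n=(\tilde L_n/L_n)^{-d/\alpha}\to 1$. With that said, both proofs are valid; yours is shorter, while the paper's avoids invoking H\"older continuity of $\nu$ in this step.
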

\begin{proof}
We have
$$0<D^\alpha_\nu(\lambda)=\displaystyle\varlimsup_{\epsilon\to 0}\frac{\nu(\lambda-\epsilon, \lambda+\epsilon)}{(2\epsilon)^\alpha}<\infty.$$
Choose $\beta^{-1}_{L+1}<\epsilon\leq\beta_L^{-1}$, then for interval $I=[-c,c]$ (for $c>0$) we have
$$\lambda+\epsilon I \subseteq \lambda+\beta_L^{-1}I\ \Rightarrow\ \nu(\lambda+\epsilon I)\leq \nu(\lambda+\beta_L^{-1}I).$$
Using $\beta_{L+1}^\alpha \epsilon^\alpha\ge 1$,
\begin{align}
\frac{\beta_L^\alpha\nu\big(\lambda+\beta_L^{-1}I\big)}{|I|^\alpha} &\ge \bigg(\frac{\beta_L}{\beta_{L+1}}\bigg)^\alpha\frac{\nu\big( \lambda+\epsilon I\big)}
{(\epsilon |I|)^\alpha}~~~\\
&=  \bigg(\frac{\beta_L}{\beta_{L+1}}\bigg)^\alpha\frac{\nu(\lambda-c\epsilon,\lambda+c\epsilon)}{(\epsilon |I|)^\alpha}\nonumber,
\end{align}
From above we get
\begin{align}\label{supeq1}
 \sup_{L\ge M}\frac{\beta_L^\alpha\nu\big(\lambda+\beta_L^{-1}I\big)}{|I|^\alpha} &\ge \bigg(\frac{1}{1+\frac{1}{M}}\bigg)^d
\sup_{\epsilon \in (\beta_{L+1}^{-1}, \beta_L^{-1}],~L\ge M}\frac{\nu\big( \lambda+\epsilon I\big)}{(\epsilon |I|)^\alpha}\nonumber\\
&\ge\bigg(\frac{1}{1+\frac{1}{M}}\bigg)^d \sup_{\epsilon\in(0,\beta_M^{-1}]}\frac{\nu\big( \lambda+\epsilon I\big)}{(\epsilon |I|)^\alpha}.
\end{align}
Here we used the fact that
$$
\bigcup_{L\ge M}(\beta_{L+1}^{-1},\beta_L^{-1}]=(0,\beta_M^{-1}]~~and~~\bigg(\frac{\beta_L}{\beta_{L+1}}\bigg)^\alpha\ge\bigg(\frac{1}{1+\frac{1}{M}}\bigg)^d,~for~L\ge M.
$$
Taking limit $M\to\infty$ in \eqref{supeq1} and using definition of limsup, we get
\begin{equation}
 \label{lower}
\varlimsup_{L\to\infty}\frac{\beta_L^\alpha\nu\big(\lambda+\beta_L^{-1}I\big)}{|I|^\alpha} \ge D^\alpha_\nu(\lambda).
\end{equation}
Similarly starting with  $\epsilon \in (\beta_{L+1}^{-1}, \beta_L^{-1}]$
we get the inequality
$$
\frac{\beta_{L+1}^\alpha\nu\big(E+\beta_{L+1}^{-1}I\big)}{|I|^\alpha} 
\le \bigg(\frac{\beta_{L+1}}{\beta_{L}}\bigg)^\alpha\frac{\nu\big( \lambda+\epsilon I\big)}{(\epsilon |I|)^\alpha}~~~
$$
and proceed as in the above argument, with upper bounds now, to get
\begin{equation}\label{upper}
 \varlimsup_{L\to\infty}\frac{\beta_L^\alpha \nu\big(E+\beta_L^{-1}I\big)}{|I|^\alpha}\le  D^\alpha_{\nu}(\lambda).
\end{equation}
Putting the inequalities (\ref{lower}) and (\ref{upper}) we get
$$
\varlimsup_{L\to\infty}\frac{\beta_L^\alpha \nu\big(\lambda+\beta_L^{-1}I\big)}{|I|^\alpha} =  D^\alpha_{\lambda}(\lambda).
$$
Now using the fact $\beta_L=L^{d/\alpha}$ we have
\begin{equation}
 \label{limsup}
\varlimsup_{L\to\infty}L^d\nu(\lambda+\beta_L^{-1}I)=|I|^\alpha D^\alpha_\nu(\lambda).
\end{equation}
The above imply that there exist a subsequence $\{L_n\}$ such that
$$
\lim_{n\to\infty}L_n^d\nu(\lambda+\beta_{L_n}^{-1}I)=|I|^\alpha D^\alpha_\nu(\lambda).
$$
\end{proof}
\section{ Proof of the Theorem \ref{main}.}
We have
\begin{equation}
 \label{eqi_char_fun}
\mathbb{E}\big|e^{it\xi^{\omega}_{L,\lambda}(I\times Q)}-e^{it\eta^{\omega}_{L,\lambda}(I\times Q)}\big|\leq
 |t|\mathbb{E}\big(\big|\xi^{\omega}_{L,\lambda}(I\times Q)-\eta^{\omega}_{L,\lambda}(I\times Q)\big|\big)
\end{equation}
We  are using  the following fact
\begin{align*}
 |e^{itx}-e^{ity}|^2=2(1-\cos t(x-y))=4\sin^2\frac{t(x-y)}{2}\leq |t(x-y)|^2.
\end{align*}
From (\ref{etal}) and (\ref{parti}) we have
\begin{align}
\label{cal_of_char}
 \mathbb{E}\big[e^{it\eta^{\omega}_{L,\lambda}(I\times Q)}\big]&=\mathbb{E}\big[e^{it\sum_{p\in \Gamma_L}\tilde{\eta}^{\omega}_{p,\lambda}(I\times Q)}\big]\\
&=\mathbb{E}\big[e^{it\tilde{\eta}^{\omega}_{p,\lambda}(I\times Q)}\big]^{|\Gamma_L|}.\nonumber
\end{align}
By definition of $\tilde{\eta}^\omega_{p,\lambda}$ we have
\begin{align}
 \label{expect}
\mathbb{E}\big[e^{it\tilde{\eta}^{\omega}_{p,\lambda}(I\times Q)}\big] &=\sum_{k=0}^\infty e^{itm}\mathbb{P}\big(\tilde{\eta}^{\omega}_{p,\lambda}(I\times Q)=k\big)\\
&=1+\mathbb{E}\big[\tilde{\eta}^{\omega}_{p,\lambda}(I\times Q)\big](e^{it}-1)+R_L\nonumber.
\end{align}
where $R_L$ is given by
\begin{align}
\label{remainder}
R_L &=\sum_{k=0}^\infty e^{itk}\mathbb{P}\big(\tilde{\eta}^{\omega}_{p,\lambda}(I\times Q)=k\big)-1-
\mathbb{E}\big[\tilde{\eta}^{\omega}_{p,\lambda}(I\times Q)\big](e^{it}-1)\\
 &=\sum_{k=0}^\infty e^{itk}\mathbb{P}\big(\tilde{\eta}^{\omega}_{p,\lambda}(I\times Q)=k\big)-
 \sum_{k=0}^\infty\mathbb{P}\big(\tilde{\eta}^{\omega}_{p,\lambda}(I\times Q)=k\big)\nonumber\\
& \qquad-(e^{it}-1)\sum_{k=0}^\infty k~\mathbb{P}\big(\tilde{\eta}^{\omega}_{p,\lambda}(I\times Q)=k\big)\nonumber\\
&=\sum_{k=2}^\infty (e^{itk}-ke^{it}+k-1)\mathbb{P}\big(\tilde{\eta}^{\omega}_{p,\lambda}(I\times Q)=k\big)\nonumber.
\end{align}
Set $I_{L,\lambda}=\lambda+\beta_L^{-1}I$ and using $|e^{itk}-ke^{it}+k-1|\leq 2k$ for $k\ge2$ we get
\begin{align}
 \label{error_estimate}
|R_L| &\leq \sum_{k=2}^\infty |e^{itk}-ke^{it}+k-1|\mathbb{P}\big(\tilde{\eta}^{\omega}_{p,\lambda}(I\times Q)=k\big)\\
&=2\sum_{k=2}^\infty k \mathbb{P}\big(\tilde{\eta}^{\omega}_{p,\lambda}(I\times Q)=k\big)\nonumber\\
&\leq2\sum_{k=2}^\infty k(k-1) \mathbb{P}\big(\tilde{\eta}^{\omega}_{p,\lambda}(I\times Q)=k\big)\nonumber\\
&=2\mathbb{E}\bigg[(\tilde{\eta}^{\omega}_{p,\lambda}(I\times Q)\big(\tilde{\eta}^{\omega}_{p,\lambda}(I\times Q)-1\big)\bigg]\nonumber\\
&\leq 2\mathbb{E}\bigg[(\tilde{\eta}^{\omega}_{p,\lambda}(I\times \mathbb{R}^d)\big(\tilde{\eta}^{\omega}_{p,\lambda}(I\times \mathbb{R}^d)-1\big)\bigg]\nonumber\\
&=2\mathbb{E}\bigg[Tr(E_{H^\omega_{B_p(L)}}(I_{L,\lambda}))\big(Tr(E_{H^\omega_{B_p(L)}}(I_{L,\lambda}))-1\big)\bigg]\nonumber\\
&\leq 2 \big(Q_\mu(|I_{L,\lambda}|)|B_p(L)|\big)^2~~~(using~\ref{Minami})\nonumber\\
&\leq 2 \big(|I_{L,\lambda}|^\alpha~l_L^d\big)^2\nonumber\\
&=O\big(L^{-2d}~l_L^{2d}\big)\nonumber
\end{align}
Using $|\Gamma_L|\simeq O\big(\big(\frac{L}{l_L}\big)^d\big)$ (see (\ref{nob})) in above we get
\begin{align}
\label{error_0}
 |\Gamma_L||R_L| &\leq O \bigg(\frac{l_L^d}{L^d}\bigg)\rightarrow 0~~as~~L\to\infty.
\end{align}
Combining above with (\ref{expect}), (\ref{cal_of_char}) and (\ref{eqi_char_fun}) will give
\begin{align}
\label{same_limit}
 \lim_{L\to\infty}\mathbb{E}\big[e^{it\xi^{\omega}_{L,\lambda}(I\times Q)}]&=\lim_{L\to\infty}\mathbb{E}\big[e^{it\eta^{\omega}_{L,\lambda}(I\times Q)}\big]\\
&=\lim_{L\to\infty}\bigg(1+\frac{|\Gamma_L|\big[\mathbb{E}(\tilde{\eta}^{\omega}_{p,\lambda}(I\times Q))(e^{it}-1)+R_L\big]}{|\Gamma_L|}\bigg)^{|\Gamma_L|}\nonumber\\
&=\lim_{L\to\infty}\bigg(1+\frac{|\Gamma_L|\mathbb{E}(\tilde{\eta}^{\omega}_{p,\lambda}(I\times Q))(e^{it}-1)}{|\Gamma_L|}\bigg)^{|\Gamma_L|}\nonumber.
\end{align}
To compute the limit, we use the subsequence so that \eqref{PrcLimEqn1} holds. Using that subsequence we get:
\begin{align}
 \lim_{n\to\infty}|\Gamma_{L_n}|\mathbb{E}(\tilde{\eta}^{\omega}_{p,\lambda}(I\times Q))
&=\lim_{n\rightarrow\infty}\sum_{p\in\Gamma_{L_n}}\mathbb{E}(\tilde{\eta}^{\omega}_{p,\lambda}(I\times Q))\\
&=\lim_{n\to\infty}\mathbb{E}(\eta^\omega_{{L_n},\lambda}(I\times Q)~~(using~(\ref{etal}))\nonumber\\
&=\lim_{n\to\infty}\mathbb{E}(\xi^\omega_{{L_n},\lambda}(I\times Q)~~(using ~\ref{characteristic})\nonumber\\
&=\lim_{n\to\infty}\sum_{n\in {L_n} Q}\mathbb{E}(\langle \delta_n, E_{H^\omega}(\lambda+\beta_{L_n}^{-1}I)\delta_n\rangle)\nonumber\\
&=|Q|\lim_{{L_n}\to\infty}L_n^d\nu(\lambda+\beta_{L_n}^{-1}I)\nonumber\\
&= |I|^\alpha D^\alpha_\nu(\lambda)|Q|\qquad\ \  (using \eqref{PrcLimEqn1})\nonumber
\end{align}
Using above in the (\ref{same_limit}) together with the fact  $\big(1+\frac{z_n}{n}\big)^n\rightarrow e^z$, 
whenever $z_n\to z$ as $n\to\infty$ gives 
\begin{equation*}
 \mathbb{E}\big[e^{it\xi_{L_n,\lambda}^\omega(I\times Q)}\big]\xrightarrow{n\to\infty}e^{|I|^\alpha D^\alpha_\nu(\lambda)|Q|(e^{it}-1)}.
\end{equation*}
Which shows that $\{\xi_{L_n,\lambda}^\omega(I\times Q)\}$ converges in distribution to a Poisson random variable with parameter $|I|^\alpha D^\alpha_\nu(\lambda)|Q|$.\\\\
{\bf Acknowledgement:} We thank M Krishna for useful discussion and valuable comments. We also thank the referees for their helpful comments and suggestions.

\end{document}